\newcommand{\R}{{\mathbb{R}}}
\newcommand{\Rl}{{(\mathbb{R}, \le)}}
\newcommand{\G}{\mathbf{Graph}}
\newcommand{\GM}{\mathbf{Graph}_{\textnormal{m}}}
\renewcommand{\S}{\mathbf{Set}}
\renewcommand{\P}{\mathbf{Poset}}
\newcommand{\PM}{\mathbf{Poset}_{\textnormal{m}}}
\newcommand{\WP}{\mathbf{WDPoset}}
\newcommand{\WPM}{\mathbf{WDPoset}_{\textnormal{m}}}
\newcommand{\C}{\mathbf{C}}
\newcommand{\CM}{\mathbf{C}_{\textnormal{m}}}
\newcommand{\Obj}{{\textnormal{Obj}}}
\newcommand{\id}{{\textnormal{Id}}}
\newcommand{\core}{{\textnormal{core}}}
\newcommand{\free}{{\textnormal{Free}}}
\newcommand{\maximal}{{\textnormal{M}}}
\begin{document}

\title{Beyond topological persistence: Starting from networks}

\author{Mattia G. Bergomi$^1$, Massimo Ferri$^2$, Pietro Vertechi$^3$, Lorenzo Zuffi$^2$}

\authorrunning{MG Bergomi, M Ferri, P Vertechi, L Zuffi}

\institute{$^1$ Veos Digital, Milan, Italy\\
$^2$ ARCES and Dept. of Mathematics, Univ. of Bologna, Italy\\
$^3$ Champalimaud Center for the Unknown, Lisbon, Portugal
\email{mattia.bergomi@veos.digital, massimo.ferri@unibo.it, pietro.vertechi@neuro.fchampalimaud.org, lorenzo.zuffi@studio.unibo.it},
}
%
%

\maketitle              
\begin{abstract}
Persistent homology enables fast and computable comparison of topological objects. However, it is naturally limited to the analysis of topological spaces. We extend the theory of persistence, by guaranteeing robustness and computability to significant data types as simple graphs and quivers. We focus on categorical persistence functions that allow us to study in full generality strong kinds of connectedness such as clique communities, $k$-vertex and $k$-edge connectedness directly on simple graphs and monic coherent categories.
%
%
\end{abstract}

\begin{keywords}
Generalized persistence, posets, coherent categories, connectedness, graphs
\end{keywords}

\section{Introduction}

Persistent homology allows for swift and robust comparison of topological objects. However, raw data are rarely endowed with a topological structure. Persistent homology and topological persistence are by their own nature bound to topological spaces. Thus, in applications, persistent homology is applied after data are arbitrarily mapped to topological spaces by means of auxiliary constructions (e.g.~\cite{zomorodian2005computing,kurlin2016fast,rieck2018clique}). Although these constructions have been employed successfully in several domains (e.g.~\cite{ferri2017persistent,expert2019topological}), they unavoidably alter the information carried by the original data set.

Rank-based persistence~\cite{bergomi2019rank} extends topological persistence to arbitrary categories from an axiomatic perspective and by means of category theory. In summary, the theory developed in~\cite{bergomi2019rank} allows to compute the persistence of objects in arbitrary source categories (rather than topological spaces) and consider any regular category as target category, whereas classical persistence is limited to vector spaces and sets.

Our main aim is to build on the aforementioned categorical generalization to allow for a more direct analysis of significant data types such as simple graphs and quivers, and guarantee the properties of stability and universality of the classical persistence framework. With this aim in mind, we introduce the definition of \textit{weakly directed properties} as a tool to easily build categorical persistence functions that describe graph-theoretical concepts of connectivity, e.g. clique communities, and $k$-vertex and $k$-edge connectedness. Thereafter, we extend these constructions to the more general setting of coherent categories.

In more details, in Section~\ref{sec:persistencesubobjects}, we provide the definition of monic categorical persistence function and show how the persistence diagrams associated to such functions can be described as multisets of points and half-lines, as in the classical framework. Furthermore, we define the natural pseudodistance in this general context and list the assumptions necessary to obtain tame filters.
We introduce in Section~\ref{sec:weaklydirectedproperties} our framework in the category of weakly directed posets. We define stable monic persistence functions in this category, and prove stability and universality. This construction allows us to define \textit{weakly directed properties} and describe the associated persistence functions. In particular, we show that the classical clique communities construction is a weakly directed property.
Generalizing further our framework to monic subcategories of coherent categories in Section~\ref{sec:generalizedconnectedness}, we describe $k$-vertex and $k$-edge connectedness. Thereafter, these constructions are exemplified and their stability and universality are discussed.

All constructions built through the proposed generalized persistence are discussed on the weighted graph depicted in Fig.~\ref{fig:toy_example}. For completeness, in the same figure we compute the persistence diagrams and persistent Betti numbers of the example graph seen as a simplicial complex.

\begin{figure}[tb]
    \centering
    \includegraphics[width = 0.9 \textwidth]{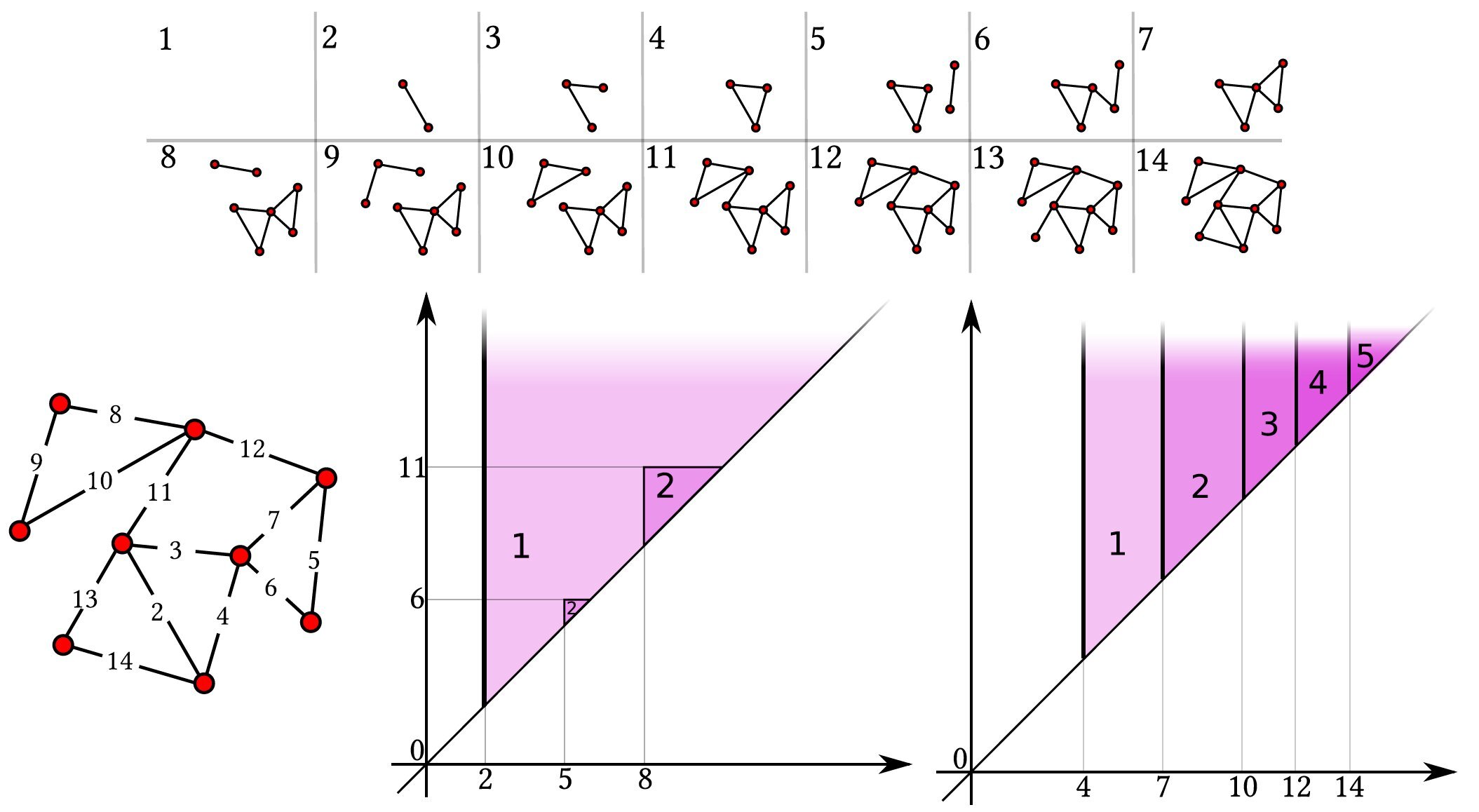}
    \caption{A weighted graph, and its persistence diagram and persistent Betti numbers.
    }\label{fig:toy_example}
\end{figure}

\section{Persistence via the poset of subobjects}
\label{sec:persistencesubobjects}

We give concrete applications of the framework developed in~\cite{bergomi2019rank}, which defines the notion of \emph{categorical persistence function}~\cite[Def. 3.2]{bergomi2019rank} in arbitrary categories. Unlike~\cite{bergomi2019rank}, here we restrict ourselves to filtrations, rather than arbitrary $\Rl$-indexed diagrams. In other words, given a category $\C$, we will consider categorical persistence functions in $\CM$, the subcategory of $\C$ where the only allowed morphisms are monomorphisms.

\begin{definition}\cite[Def. 3.2]{bergomi2019rank}
    \label{persfct}
    A \emph{persistence function} is a categorical persistence function on the category $\Rl$. It is a correspondence that maps each pair of real numbers $u \le v$, to an integer $p(u, v)$ such that, given $u_1 \le u_2 \le v_1 \le v_2$, the following inequalities hold.
    \begin{enumerate}
        \item $p(u_1, v_1) \le p(u_2, v_1)$ and $p(u_2, v_2) \le p(u_2, v_1)$, that is to say $p$ is non-decreasing in the first argument, and non-increasing in the second.
        \item $p(u_2, v_1) - p(u_1, v_1) \le p(u_2, v_2) - p(u_1, v_2)$.
    \end{enumerate}
\end{definition}

\begin{definition}\label{def:monic_persistent_function}
    Let $\C$ be an arbitrary category.
    A \emph{monic persistence function} on $\C$ is a categorical persistence function on $\CM$. It maps each inclusion $u \hookrightarrow v$ to an integer $p(u, v)$, such that, given $u_1 \hookrightarrow u_2 \hookrightarrow v_1 \hookrightarrow v_2$, the following inequalities hold.
    \begin{enumerate}
        \item $p(u_1, v_1) \le p(u_2, v_1)$ and $p(u_2, v_2) \le p(u_2, v_1)$.
        \item $p(u_2, v_1) - p(u_1, v_1) \le p(u_2, v_2) - p(u_1, v_2)$.
    \end{enumerate}
\end{definition}

A filtration $F$ in $\C$ can naturally be seen as a functor from $\Rl$ to $\CM$. Therefore, by functoriality, a monic persistence function on $\C$ and a filtration in $\C$ induce a persistence function. In turn, such persistence function $p$ induces a persistence diagram, which respects familiar properties. Importantly, the construction of persistence diagrams, as the familiar multiset of points and half-line segments is guaranteed by the following proposition.

\begin{proposition}\label{repr}
    For p and ${\cal D}F$ we have
    \[
        p_F{(\beta, \gamma)} =
        \sum_{\substack{(u, v) \in \Delta^*,\\
                u<\beta, \ v>\gamma}}\mu(u, v)
    \]
    for every $(\beta, \gamma) \in \Delta^*$ which is no discontinuity point of $p_F$.
\end{proposition}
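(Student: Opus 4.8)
The plan is to prove Proposition~\ref{repr} by recognizing it as the analogue of the classical persistence‐diagram reconstruction formula, and adapting the standard inclusion–exclusion argument to the monic/categorical setting. The statement asserts that the value $p_F(\beta,\gamma)$ equals the number of points $(u,v)$ of the persistence diagram $\mathcal{D}F$ (counted with multiplicity $\mu(u,v)$) lying strictly to the upper-left of $(\beta,\gamma)$, i.e.\ with $u<\beta$ and $v>\gamma$. Since $p_F$ is an honest persistence function on $\Rl$ by functoriality (as explained in the paragraph preceding the proposition), I expect that the entire argument lives at the level of persistence functions on $\Rl$ and uses only Definition~\ref{persfct}; the categorical origin of $p_F$ plays no further role once functoriality has been invoked.

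First I would recall the definition of the multiplicity $\mu(u,v)$ of a point $(u,v)\in\Delta^*$ associated to a persistence function $p$. This is the standard four-term alternating sum
\[
    \mu(u,v) = \lim_{\e\to 0^+}\bigl(p(u+\e,v-\e) - p(u-\e,v-\e) - p(u+\e,v+\e) + p(u-\e,v+\e)\bigr),
\]
which is well defined and nonnegative precisely because of the two axioms in Definition~\ref{persfct}: axiom~(1) controls monotonicity, and axiom~(2) — the subadditivity/concavity inequality — guarantees that this second finite difference is nonnegative, so the multiplicities are genuinely nonnegative integers. I would note that the limit exists because $p$ takes integer values and is monotone in each argument, hence locally constant away from a discrete set of discontinuity lines.

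The core of the proof is then a telescoping/inclusion–exclusion argument. Fix a point $(\beta,\gamma)$ that is not a discontinuity point of $p_F$. The idea is to express $p_F(\beta,\gamma)$ as a sum of the second finite differences $\mu(u,v)$ over all diagram points in the open region $\{u<\beta,\ v>\gamma\}$. Concretely, I would choose a sufficiently fine rectangular grid of real values, refined so that every discontinuity of $p_F$ in the relevant quadrant falls on a grid line, and then write the value $p_F(\beta,\gamma)$ as a telescoping double sum of the four-term differences over the grid cells contained in the quadrant $u<\beta$, $v>\gamma$. The monotonicity axiom~(1) is used to show that the "boundary" contributions vanish: as one slides the corners toward the extreme values of the first and second arguments, the persistence function stabilizes (it becomes $0$ when the second argument is pushed below the first, and it attains its full value in the interior), so all the telescoped terms at the frontier of the grid cancel except the single term $p_F(\beta,\gamma)$. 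What survives is exactly the sum of the local second differences, which by definition are the multiplicities $\mu(u,v)$ concentrated at the diagram points.

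The main obstacle I anticipate is the careful bookkeeping at the boundary of the summation region — making precise that the telescoping sum collapses to $p_F(\beta,\gamma)$ and that no mass escapes to infinity or accumulates on the diagonal. This requires a tameness/finiteness hypothesis ensuring that only finitely many diagram points have positive multiplicity in the quadrant (so the sum is finite and the limits interchange with the summation), together with the verification that $p_F$ vanishes for arguments with $v$ below the relevant threshold. I would handle this by invoking the tameness assumptions alluded to in the introduction (the hypotheses guaranteeing tame filters) and by treating the strict inequalities $u<\beta$, $v>\gamma$ carefully, using that $(\beta,\gamma)$ is assumed to be a point of continuity of $p_F$ so that no diagram point lies on the boundary lines $u=\beta$ or $v=\gamma$. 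Once the boundary analysis is settled, the identity follows directly from the definition of $\mu$ and the telescoping structure, exactly as in the classical case.
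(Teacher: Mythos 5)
Your proposal is mathematically sound, but it takes a genuinely different route from the paper: the paper does not prove the formula from scratch at all---its entire proof is a one-line specialization of the general representation result \cite[Prop.~3.17]{bergomi2019rank}, instantiated with $\alpha<\bar{a}$ (a value below which the persistence function is trivial) and $\delta=+\infty$. What you outline---defining $\mu(u,v)$ as the nonnegative second finite difference and collapsing a telescoping double sum over a grid refined at the discontinuities---is in essence a reconstruction of the proof of that cited proposition, in the style of the classical representation theorems for persistent Betti numbers (the $k$-triangle lemma). Your route buys self-containment and makes visible exactly where the two axioms of Definition~\ref{persfct} enter (axiom (2) gives nonnegativity of $\mu$, axiom (1) gives the boundary collapse); the paper's route buys brevity and outsources the two delicate points that your sketch only gestures at. First, cornerpoints at infinity (the half-lines of the diagram) carry a multiplicity defined by a separate limit, not by your four-term formula, so the sum over $\Delta^*$ must account for both kinds of terms---this is precisely what the choice $\delta=+\infty$ packages in the citation. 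Second, the vanishing of the telescoped terms as the first argument decreases is not a consequence of monotonicity alone; it is exactly the role of the threshold $\bar{a}$, corresponding to your appeal to the tameness/finiteness assumptions. If you wrote your argument out in full, those two points are where the real work lies; the telescoping itself is routine.
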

\begin{proof}
    By applying~\cite[Prop. 3.17]{bergomi2019rank} with $\alpha<\bar{a}$ and $\delta = +\infty$.
\end{proof}

This result implies that the discontinuity sets of $p_F$ are either vertical or horizontal (possibly unbounded) segments with end-points in the cornerpoints. This means that persistence functions have the appearance of superimposed triangles, typical of Persistent Betti Number functions.

\paragraph{Natural pseudodistance.} It is possible to define the natural pseudodistance on $\CM$. In the following, we will adopt the following finiteness assumptions, to ensure stability of all persistence functions we will consider. Note that, whenever we refer to categories such as $\S, \P, \G$, we always refer to the finite version---finite sets, finite posets, and finite simple graphs.

\paragraph{Finiteness assumptions.} From now on, we assume that every object in $\C$ has only a finite number of distinct subobjects (to ensure tameness in all constructions). Furthermore, we will only consider filtrations $F$ that admit a colimit $F(\infty)$ in $\C_m$. As every object has only a finite number of distinct subobjects, this means that $F(x \le x')$ is an isomorphism for sufficiently large $x, x'$. This will allow us to define the {\em natural pseudodistance}~\cite{FrMu99,dAFrLa10,Les15}.

\begin{definition}\label{natural}
    Let $F_1, F_2$ be two filtrations in $\C$. Let $\mathcal{H}$ be the (possibly empty) set of isomorphisms between $F_1(\infty)$ and $F_2(\infty)$. Given an isomorphism $\mathcal{H}\ni\phi\colon F_1(\infty) \rightarrow F_2(\infty)$, we can consider the set
    \begin{equation*}
        L_\phi = \{h\in\R_{\ge 0} | \text{for all } x\in\R, \; F_2(x-h) \subseteq \phi(F_1(x)) \subseteq F_2(x+h)\},
    \end{equation*}
    where the inclusion is among subobjects of $F_2(\infty)$.
    The {\em natural pseudodistance} between $F_1$ and $F_2$ is
    \begin{equation*}
        \delta\left(F_1, F_2\right) =
        \inf \bigcup_{\phi\in\mathcal{H}} L_\phi.
    \end{equation*}
\end{definition}
The natural pseudodistance is equal to the interleaving distance, when considering $F_1, F_2$ as $\Rl$-indexed diagrams in $\CM$. By the universal property of colimits, a strong interleaving induces an isomorphism $F_1(\infty) \simeq F_2(\infty)$, which behaves correctly on sublevels. Conversely, applying $\phi$ and its inverse on sublevels induces a strong interleaving.

\subsection{Preliminaries on posets}

The first category we consider is $\WP$, the category of finite {\em weakly directed} posets.

\begin{definition}
    \label{def:weaklydirectedposet}
    A poset $P$ is {\em weakly directed} if, whenever $a,b\in P$ have a lower bound, they also have an upper bound.
\end{definition}

Stong~\cite{stong1966finite} discusses the analogy between finite posets and finite $T0$ topological spaces. The two categories are equivalent, and it is therefore possible to consider the homotopy type of a finite poset. In particular, Stong shows a procedure to determine whether two posets have the same homotopy type. For the sake of self-containment, we report here a description of the procedure from~\cite{raptisHomotopyTheoryPosets2010}.

Let $P$ be a poset. An element $p\in P$ is {\em upbeat} (resp. {\em downbeat}) if the set of all element strictly (resp. lower) than $p$ has a minimum (resp. maximum). The insertion of deletion of an upbeat or downbeat point does not change the strong homotopy type of $P$. The core of $P$, denoted $\core(P)$, is a deformation retract of $P$ that is minimal; i.e. it does not contain neither upbeat nor downbeat elements. One can always reach $\core(P)$ by successively deleting beat points from $P$.

\begin{theorem}\cite[Thm.~4]{stong1966finite}
    Two finite posets are strongly homotopy equivalent if and only if they have isomorphic cores.
\end{theorem}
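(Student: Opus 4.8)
The plan is to reduce the statement to a rigidity property of minimal posets and then invoke the fact, already recorded above, that $\core(P)$ is a deformation retract of $P$, so that $P$ and $\core(P)$ are strongly homotopy equivalent. Throughout I would use the standard description of homotopy for finite posets: a strong homotopy equivalence is witnessed by order-preserving maps $f\colon P\to Q$ and $g\colon Q\to P$ with $gf\simeq\id_P$ and $fg\simeq\id_Q$, where two order-preserving maps that are comparable pointwise (say $h\le h'$, meaning $h(x)\le h'(x)$ for all $x$) are homotopic, and homotopy is the equivalence relation generated by this comparison. Consequently $h\simeq h'$ holds precisely when they are joined by a finite fence $h=h_0,h_1,\dots,h_n=h'$ of pairwise comparable maps. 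The easy implication is then immediate: if $\core(P)\cong\core(Q)$, then from $P\simeq\core(P)\cong\core(Q)\simeq Q$ and transitivity of strong homotopy equivalence (an isomorphism being in particular such an equivalence) we get $P\simeq Q$.

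For the converse I would first note that, if $P\simeq Q$, then $\core(P)\simeq P\simeq Q\simeq\core(Q)$, so the whole content is the \emph{rigidity lemma}: two strongly homotopy equivalent minimal posets are isomorphic. To establish this it suffices to prove that any order-preserving $f\colon R\to R$ on a minimal poset $R$ with $f\simeq\id_R$ satisfies $f=\id_R$. Granting this, given minimal posets with equivalence maps $f,g$ as above, $gf\simeq\id$ forces $gf=\id$ and $fg\simeq\id$ forces $fg=\id$, so $f$ is a bijection that is order-preserving with order-preserving inverse, hence an order isomorphism.

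The core of the argument, and the step I expect to be the main obstacle, is the single-comparison case of the rigidity lemma. Suppose $f\le\id_R$ pointwise and $f\neq\id_R$; I would choose $x$ minimal among the elements with $f(x)\neq x$, so that $f(x)<x$ while $f(y)=y$ for every $y<x$. For any such $y<x$, monotonicity gives $y=f(y)\le f(x)$, so $f(x)$ is the maximum of the set $\{y\in R : y<x\}$. This exhibits $x$ as a downbeat point, contradicting the minimality (corelessness) of $R$; hence $f=\id_R$. The dual statement, $f\ge\id_R\Rightarrow f=\id_R$, follows by the same computation with upbeat points. Recognizing that a point pushed strictly downward by a map below the identity must become a downbeat point is precisely the insight that makes the whole classification work.

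Finally I would upgrade the single-comparison case to the general homotopy by induction on the length of a fence $\id_R=h_0,h_1,\dots,h_n=f$. Since $h_1$ is comparable to $h_0=\id_R$, the single-comparison case yields $h_1=\id_R$; the truncated fence $h_1,\dots,h_n$ then has length $n-1$ and still connects $\id_R$ to $f$, so the inductive hypothesis gives $f=\id_R$. Combining the rigidity lemma with the two reductions above closes both directions of the theorem, with all remaining steps being routine bookkeeping once the fence characterization of homotopy and the downbeat argument are in place.
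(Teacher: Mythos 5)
Your proposal is correct; note that the paper itself gives no proof of this statement (it is quoted directly from Stong's 1966 paper), and your argument --- reducing to the rigidity of minimal posets and showing that a self-map comparable to the identity must be the identity, since a point moved strictly downward would become a downbeat point --- is essentially the classical proof from that cited reference. The only ingredient you assume without proof is the fence characterization of strong homotopy for order-preserving maps between finite posets, which is standard and consistent with the framework the paper adopts from Stong and Raptis.
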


We are now ready to show that there is a canonical homotopy equivalence between weakly directed finite posets and finite sets.

\begin{proposition}
    Let $\free\colon \S \rightarrow \WP$ be the free poset functor, i.e. the functor that associates to a finite set $S$ the weakly directed poset $(S, =)$. $\free$ admits a left adjoint $\maximal$. Furthermore, let
    \begin{equation*}
     \epsilon\colon \maximal\circ \free \rightarrow \id_{\S}
     \qquad\text{ and }\qquad
     \eta\colon \id_{\WP} \rightarrow \free\circ \maximal
    \end{equation*}
    be the natural transformations associated to the adjunction. $\epsilon$ is a natural isomorphism, whereas $\eta$ is a natural homotopy equivalence.
\end{proposition}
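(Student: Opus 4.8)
The plan is to identify $\maximal$ with the connected-components functor and to invoke weak directedness only where it is truly needed, namely to contract the individual components. Concretely, I would first set $\maximal(P) := \pi_0(P)$, the set of connected components of the comparability graph of $P$, with $\maximal(f)$ the map induced on components (any order-preserving $f$ sends comparable elements to comparable elements, hence descends to $\pi_0$). The adjunction $\maximal \dashv \free$ then reduces to a direct inspection of the morphism sets: an order-preserving map $P \to \free(S) = (S,=)$ must send each comparable pair to a pair of equal elements, hence be constant on every component, so $\mathrm{Hom}_{\WP}(P,\free S) \cong \mathrm{Hom}_{\S}(\pi_0 P, S)$ naturally in both variables. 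This exhibits the adjunction, and shows the counit $\epsilon_S\colon \maximal(\free S) \to S$ is the canonical identification of $\pi_0(S,=)$ with $S$ (each point is its own component); hence $\epsilon$ is a natural isomorphism. Note that nothing so far uses weak directedness.

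Next I would justify the notation and prepare the homotopy statement with a structural lemma: in a finite weakly directed poset, every component has a maximum. The key point is that the relation ``$a$ and $b$ have a common upper bound'' is transitive---if $u$ bounds $a,b$ above and $v$ bounds $b,c$ above, then $u,v$ share the lower bound $b$, so weak directedness yields a common upper bound $w$ of $u,v$, which bounds $a$ and $c$. Since comparable elements trivially share an upper bound, this relation coincides with the comparability-component relation; thus any two elements of a component $C$ have an upper bound inside $C$, and a maximal element of the finite poset $C$ is then its maximum $m_C$, necessarily unique. This gives the canonical bijection $\pi_0(P) \cong \{\text{maximal elements of }P\}$ that justifies writing $\maximal$. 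Moreover each $C$ has a top element, so it is strongly contractible: any element maximal in $C \setminus \{m_C\}$ has $m_C$ as its unique strict upper bound and is therefore upbeat, and deleting such points one at a time retracts $C$ onto $\{m_C\}$. Consequently $\core(P) = \bigsqcup_C \{m_C\}$ is the discrete poset on the maximal elements, i.e. $\core(P) \cong \free(\maximal P)$.

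Finally, the unit $\eta_P\colon P \to \free(\maximal P)$ sends $a$ to its component $[a]$, i.e. to the maximum of that component under the identification above; on each $C$ it is the constant map onto $[C]$. Under $\core(P) \cong \free(\maximal P)$ this is exactly the deformation retraction of $P$ onto its core obtained by the upbeat deletions, so by Stong's theory $\eta_P$ is a (strong) homotopy equivalence; naturality is automatic since $\eta$ is the unit of the adjunction. I expect the structural lemma to be the main obstacle: it is the sole place where weak directedness is used, and it is precisely what forces each component to have a top element and hence be contractible, so that the collapsing map $\eta_P$ is a genuine homotopy equivalence rather than a mere quotient; without this hypothesis the components could carry nontrivial topology and $\eta$ would fail to be an equivalence.
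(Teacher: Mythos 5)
Your proof is correct, and it reorganizes the argument in a genuinely different way from the paper's. The paper defines $\maximal$ directly as the maximal-element functor, and weak directedness enters immediately through functoriality: for order-preserving $f\colon P\to P'$ and $l$ maximal, the maximal element $l'$ with $f(l)\le l'$ is unique because two distinct maximal elements above $f(l)$ would share a lower bound but no upper bound. The adjunction itself is then left largely implicit, and the proof concludes, just as yours does, by exhibiting $\eta_P$ as a retraction onto the core via successive upbeat deletions, where an element maximal in $P\setminus\maximal(P)$ is upbeat for exactly the same reason. Your route instead sets $\maximal=\pi_0$, proves the adjunction in full by the hom-set computation $\mathrm{Hom}_{\WP}(P,\free S)\cong\mathrm{Hom}_{\S}(\pi_0 P, S)$ --- which, as you rightly note, holds for arbitrary finite posets and needs no weak directedness --- and then concentrates the hypothesis into your structural lemma: transitivity of the ``shares an upper bound'' relation, whence each component has a maximum, identifying $\pi_0$ with the maximal-element functor. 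The two uses of weak directedness are the same observation in different clothing (your transitivity lemma is equivalent to the paper's ``distinct maximal elements admit no common lower bound''), and the homotopy part --- collapsing each component onto its top by upbeat deletions, so that $\core(P)\cong\free(\maximal(P))$ --- is essentially identical. What your decomposition buys is a complete verification of the adjunction, which the paper never spells out, plus a precise delineation of where the hypothesis is actually used; what the paper's buys is brevity, with a functor that is literally what its name says from the outset.
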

\begin{proof}
    $\maximal$ associates each weakly directed poset to the set of its maximal elements. This mapping can be extended to a functor, as for each order-preserving map $f\colon P\rightarrow P'$, given a maximal element $l\in P$, there is a unique maximal element $l'\in P'$ with $f(l)\le l'$.
    Given a set $S$, the maximal elements of the poset $(S, =)$ are clearly all elements of $S$, so $\epsilon$ is a natural isomorphism.
    For a weak-directed poset $P$, the map $\eta_P \colon P \rightarrow \free(\maximal(P))$ is a deformation retract of $P$ onto its core. To see this, starting from $P$, we can proceed by removing elements that are maximal in $P\setminus \maximal(P)$. If an elements is maximal in $P\setminus \maximal(P)$, it is necessarily upbeat: distinct maximal elements in $P$ can have no lower bound. After iteratively removing all elements in $P\setminus \maximal(P)$, we obtain the desired deformation retract $\eta_P \colon P \rightarrow \free(\maximal(P))$.
\end{proof}

The functor $\maximal\colon \WPM \rightarrow \S$ induces a monic persistence function on $\WP$, by \cite[Prop.~3.6]{bergomi2019rank}. Furthermore, such persistence function factors via a ranked category with finite colimits, $\S$, and is therefore stable by~\cite[Thm.~3.27]{bergomi2019rank}.

Universality is generally not granted for stable persistence functions.
We now follow the logical line of Thm.~32 of \cite{dAFrLa10} for proving the universality of the bottleneck (or matching) distance among the lower bounds for the natural pseudodistance that can come from distances between persistent block diagrams.

Let $F$ be a filtration in $\WP$. If $F(\infty)$ has several maximal elements, then all maximal elements arising in the filtration are bounded by one of them, so the construction can be performed for each of the lower set of maximal points of $F(\infty)$.

\begin{proposition}\label{prop:posetuniversal}
    Let $F, F'$ be to filtrations in $\WP$; let $D(F)$, \ $D(F')$ be the respective persistence diagrams. Then there exist filtrations $H, H'$ such that
    \begin{enumerate}
        \item $D(F)=D(H)$, \ $D(F')=D(H')$,
        \item $d\big(D(H), D(H')\big) = \delta\big(H, H'\big)$,
    \end{enumerate}
    where $d$ is the bottleneck distance between persistence diagrams.
\end{proposition}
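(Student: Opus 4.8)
The plan is to use stability to cut the statement down to a single inequality, and then to exhibit explicit \emph{canonical} filtrations realizing the two diagrams, on which the natural pseudodistance can be computed by hand. Since $\maximal$ factors through $\S$, the induced persistence function is stable, so for \emph{any} filtrations realizing $D(F)$ and $D(F')$ we automatically have $d\big(D(H),D(H')\big)\le\delta(H,H')$. Hence it suffices to build $H,H'$ with $D(H)=D(F)$, $D(H')=D(F')$ and $\delta(H,H')\le d\big(D(F),D(F')\big)$; together with stability this forces the equality in (2). If the two diagrams have different numbers of half-lines then $d$ and $\delta$ are both $+\infty$ and one may take $H=F$, $H'=F'$, so I assume the number of half-lines coincides.

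\emph{Canonical realization.} First I would describe how to realize an arbitrary diagram by a weakly directed poset filtration. Each half-line of birth $c$ is realized by an isolated point born at $c$ and never dominated, hence a permanent maximal element. All proper cornerpoints are attached to the single trunk whose birth is minimal among the half-lines; this is legitimate because, by the elder rule, the smallest birth occurring in the diagram is attained by a half-line, so this trunk is older than every proper point. Concretely the trunk is a chain $t_0<t_1<\cdots<t_K$ with $t_0$ born at the minimal half-line birth, and to each node $t_j$ one attaches a single pendant $a_j<t_j$, with $a_j$ born at the birth $b_j$ of the $j$-th point and $t_j$ born at its death $d_j$. A direct computation of $\maximal(H(x))$ shows that $a_j$ is maximal exactly on $[b_j,d_j)$ and is absorbed into the surviving chain-component at $d_j$, so this filtration has the prescribed diagram; one also checks every sublevel is weakly directed, and that the order of the pendants along the chain is irrelevant to the diagram.

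\emph{Aligning with the optimal matching.} Next I would fix an optimal matching $\sigma$ for $d\big(D(F),D(F')\big)$, chosen so that half-lines are paired in increasing order of birth (the sorted pairing is optimal for the bottleneck cost and in particular pairs the two minimal half-lines, hence the two trunks). I then build $H$ and $H'$ with the recipe above on a common skeleton dictated by $\sigma$: a matched pair of proper points gives a pendant on each trunk; a proper point $(b,d)$ of $D(F)$ matched to the diagonal gives a genuine pendant in $H$ and a degenerate \emph{shadow} pendant in $H'$, born and absorbed simultaneously at the midpoint $(b+d)/2$ (and symmetrically for $D(F')$). Shadow pendants are never maximal, so they leave the diagrams unchanged while guaranteeing $H(\infty)\cong H'(\infty)$: both trunks become chains with the same number of pendants, and the remaining half-lines are isolated points in bijection. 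Ordering the pendants so that $\sigma$-matched ones sit at equal heights makes the skeletal identification a poset isomorphism $\phi$, and a level-by-level check gives $d\big(D(F),D(F')\big)\in L_\phi$, since a matched pair contributes $\max(|b-b'|,|d-d'|)$ and a diagonal match contributes $(d-b)/2$, each at most the bottleneck cost; thus $\delta(H,H')\le d\big(D(F),D(F')\big)$.

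\emph{Main obstacle.} The delicate point is to satisfy several constraints simultaneously: the trunk must be older than every proper point it absorbs (the elder rule), the colimits $H(\infty)$ and $H'(\infty)$ must be isomorphic so that $\mathcal{H}$ is nonempty, and this isomorphism must transport $\sigma$ level-wise with controlled cost. Reconciling these is exactly what forces the choices above---using the minimal half-line as the common trunk, matching half-lines in sorted order, and padding with degenerate shadow pendants for diagonal matches. The two computations I would carry out in full are that shadow pendants are genuinely invisible to $\maximal$ (so $D(H)$ and $D(H')$ are unchanged) and that pendant heights may be permuted freely without altering the diagram, which together let $\phi$ follow $\sigma$ rather than the intrinsic death order.
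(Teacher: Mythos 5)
Your overall route is the same as the paper's: use stability to reduce everything to exhibiting realizations $H,H'$ with $\delta(H,H')\le d\big(D(F),D(F')\big)$, then build explicit poset filtrations out of an optimal matching padded with diagonal points, and check the interleaving level by level via an isomorphism $H(\infty)\to H'(\infty)$ that transports the matching. Where you differ is the realization itself, and that is where your proof has a genuine gap. The paper's realization is flat: each proper cornerpoint $p_i$ becomes an element born at its birth $x_i$, subject to a single relation $p_i<p_0$ (where $p_0$ is the future point at infinity) which is switched on exactly at the death time $y_i$ --- note that in the paper's formula the order relation itself depends on the filtration parameter $x$. This is legitimate because monomorphisms of posets are injective order-preserving maps, not full embeddings, so relations may appear as the filtration grows; and since there are no relations at all among proper points, no ordering issue can arise.

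Your chain-with-pendants trunk reintroduces exactly the problem this avoids. By transitivity, a pendant $a_j<t_j$ lies below every later chain node $t_i$, $i>j$; so if each sublevel $H(x)$ carries the order induced from $H(\infty)$, then $a_j$ ceases to be maximal at $\min_{i\ge j}d_i$, not at $d_j$. Hence the diagram comes out right only if pendants are stacked in increasing order of death, whereas your argument needs them stacked according to the optimal matching on both sides simultaneously --- and optimal matchings need not be death-monotone. Concretely, let $D(F)$ consist of a half-line at $0$ and points $(0,10),(5,11)$, and $D(F')$ of a half-line at $0$ and points $(0,11),(5,10)$: the unique optimal matching (cost $1$) pairs $(0,10)\leftrightarrow(0,11)$ and $(5,11)\leftrightarrow(5,10)$, and with matched pendants at equal heights one of $D(H),D(H')$ is necessarily wrong, whichever height order you choose. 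So ``pendant heights may be permuted freely without altering the diagram'' --- one of the two facts you explicitly deferred --- is false under the induced-order reading; it becomes true only if you let the relation $a_j<t_j$ enter the filtration at time $d_j$, which is precisely the paper's growing-order device, and once you invoke it the chain buys you nothing over the paper's star-shaped poset. A smaller slip, which the paper's own sketch shares: a shadow pendant placed at the midpoint $(b+d)/2$ can be born before the trunk of $H'$ (e.g.\ $D(F)=\{\text{half-line at }0,\ (0,2)\}$, $D(F')=\{\text{half-line at }3\}$), creating a spurious early half-line in $D(H')$; such diagonal points must be delayed to $\max\big((b+d)/2,\,c_0'\big)$, which one checks still has matching cost at most $d$.
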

\begin{proof}
    There is at least one bijection $\gamma$ between the multisets $D(F)$ and $D(F')$ which realizes the distance $d = d\big(D(F), D(F')\big)$. There are $p_0, p_0'$, cornerpoints at infinity of $D(F)$, \ $D(F')$ respectively, and proper cornerpoints $p_1, \ldots, p_m$, $p'_1, \ldots, p'_m$. Note that here we consider also cornerpoints on the diagonal, but only those that are needed to realize the matching distance. Up to relabeling the cornerpoints, we can assume that the matching is given by $p_i \mapsto p_i'$ for $i \in \{0, \dots, m\}$. We denote $x_i, y_i$ (resp. $x_i', y_i'$) the coordinates of the cornerpoint $p_i$ (resp. $p_i'$).
    The distance $d$ is then the maximum of the distance in the $L^\infty$ norm of corresponding points. We now construct new filtrations of posets $H, H'$ as follows.
    \begin{equation*}
        \begin{aligned}
            &H(x) = \{p_i | x_i \le x\}\\
            &p_i < p_j \text{ if } j=0 \text{ and } y_i \le x,
        \end{aligned}
        \qquad\text{ and }\qquad
        \begin{aligned}
            &H'(x) = \{p_i' | x_i' \le x\}\\
            &p_i' < p_j' \text{ if } j=0 \text{ and } y_i' \le x.
        \end{aligned}
    \end{equation*}
    Choosing the isomorphism $H(\infty) \rightarrow H'(\infty)$ given by $p_i \mapsto p_i'$, we can show that the pseudodistance between $H$ and $H'$ is smaller or equal than $d$. By stability, it must be equal.
\end{proof}

\subsection{Weakly directed properties}\label{sec:weaklydirectedproperties}

Some of the most informative graph-theoretical concepts describe the local connectivity of a graph from different viewpoints, e.g. considering the number of edges to be removed to disconnect it. The following definitions express these kind of stronger connectivities that we will use as categorical persistence functions, e.g. connected components, path-connected components (giving rise to 0-Betti numbers in \v{C}ech and singular homology respectively).

\begin{definition}
    \label{def:weaklydirectedproperty}
    Let $\C$ be a category and let ${\cal P}\subseteq \Obj(\C) / \simeq$ be a \emph{property} that is preserved by isomorphisms. We call $S_{\cal P}$ the functor $\CM \rightarrow \PM$ that associates to each object in $\C$ the poset of its subobjects that respect the property $\cal P$. We say that the property $\cal P$ is {\em weakly directed} if, for all $X \in\Obj(\C)$, $S_{\cal P}(X)$ is a weakly directed poset.
\end{definition}


\begin{proposition} \label{prop:persistencefromproperty}
    Let $\C$ be a category, and let $\CM$ be the subcategory of $\C$ where the only allowed morphisms are monomorphisms.
    Let ${\cal P}$ be a weakly directed property on $\Obj(\C)$. Then $\cal P$ induces a stable categorical persistence function on $\CM$ which we denote $p_{\cal P}$.
\end{proposition}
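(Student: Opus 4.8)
The plan is to exhibit $p_{\cal P}$ as the categorical persistence function attached to a single functor $\CM \to \S$, namely the composite of $S_{\cal P}$ with the maximal-element functor $\maximal$, and then to reuse verbatim the stability argument already given for $\maximal$ above.

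First I would upgrade the codomain of $S_{\cal P}$ from $\PM$ to $\WPM$. By hypothesis ${\cal P}$ is weakly directed, which says precisely that $S_{\cal P}(X)$ is a weakly directed poset for every $X \in \Obj(\C)$; hence $S_{\cal P}$ factors through the subcategory $\WPM$. I would also confirm the (essentially definitional) functoriality: a monomorphism $f\colon X \hookrightarrow Y$ sends a ${\cal P}$-subobject $A \hookrightarrow X$ to the subobject $A \hookrightarrow Y$ obtained by post-composing with $f$. The underlying object $A$ is unchanged, so it still satisfies ${\cal P}$ because ${\cal P}$ is isomorphism-invariant; inclusions are visibly preserved; and since $f$ is monic, two distinct subobjects of $X$ stay distinct over $Y$, so the induced map of posets is injective, i.e. a morphism of $\WPM$.

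Next I would post-compose with $\maximal\colon \WPM \to \S$ to obtain a functor $\CM \to \S$. Exactly as in the case of $\maximal$ treated above, \cite[Prop.~3.6]{bergomi2019rank} then yields a monic categorical persistence function on $\C$, which I define to be $p_{\cal P}$. Unwinding the construction, for a filtration $F$ and a pair $u \hookrightarrow v$ the value $p_{\cal P}(u,v)$ is the cardinality of the image of $\maximal(S_{\cal P}(F(u))) \to \maximal(S_{\cal P}(F(v)))$ in $\S$, i.e. the number of maximal ${\cal P}$-subobjects of $F(u)$ that survive into $F(v)$.

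Finally, stability is inherited for free: the function $p_{\cal P}$ factors through the ranked category $\S$, which has finite colimits, so \cite[Thm.~3.27]{bergomi2019rank} applies just as it did for $\maximal$. The only genuinely new content is thus the passage from $\CM$ to $\WPM$ through $S_{\cal P}$; the one point I would watch carefully there is that monomorphisms induce \emph{injective} poset maps and preserve ${\cal P}$, since everything downstream relies on the composite being a bona fide functor into a ranked category, after which the cited results supply both the persistence-function axioms and stability.
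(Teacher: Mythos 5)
Your proof is correct and follows essentially the same route as the paper: both factor through $S_{\cal P}\colon \CM \to \WPM$ (using the weakly directed hypothesis to land in $\WPM$ rather than $\PM$) and reduce stability to the factorization through the ranked category $\S$ via \cite[Thm.~3.27]{bergomi2019rank}. The only cosmetic difference is that the paper pulls back the already-constructed persistence function on $\WPM$ along $S_{\cal P}$ using \cite[Prop.~3.3]{bergomi2019rank}, whereas you apply \cite[Prop.~3.6]{bergomi2019rank} directly to the composite $\maximal \circ S_{\cal P}$; these yield the same function $p_{\cal P}$, and your explicit check of the functoriality of $S_{\cal P}$ is a detail the paper subsumes into Def.~\ref{def:weaklydirectedproperty}.
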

\begin{proof}
    We can consider the functor $S_{\cal P}\colon \CM\rightarrow\WPM$. As $\WPM$ is equipped with a persistence function, this induces a persistence function on $\CM$ by~\cite[Prop. 3.3]{bergomi2019rank}.
\end{proof}

Unlike stability, universality of $p_{\cal P}$ is in general not guaranteed. However, the following condition is sufficient to ensure it.

\begin{proposition}
    \label{prop:universalityproperty}
    Let ${\cal P}, p_{\cal P}, S_{\cal P}$ be as in Def.~\ref{def:weaklydirectedproperty} and Prop.~\ref{prop:persistencefromproperty}. Let us further assume that there exists a functor $T\colon \WPM \rightarrow \CM$ such that $\maximal\circ S_{\cal P}\circ T$ is naturally isomorphic to $\maximal$. That is to say, for all $P\in\Obj(\WPM)$, the maximal elements of $S_{\cal P}(T(P))$ are in a one-to-one correspondence with the maximal elements of $P$, and this bijection is natural in $P$. Then the bottleneck distance between diagrams induced by $p_{\cal P}$ is universal with respect to the natural pseudodistance.
\end{proposition}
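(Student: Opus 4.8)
The plan is to reduce the statement to the poset case already handled in Prop.~\ref{prop:posetuniversal}, and then transport the optimal realizing filtrations from $\WP$ into $\CM$ through the functor $T$. Concretely, given filtrations $F, F'$ in $\C$ with persistence diagrams $D(F), D(F')$ and bottleneck distance $d = d\big(D(F), D(F')\big)$, I would first apply Prop.~\ref{prop:posetuniversal} to the diagrams $D(F), D(F')$ to obtain poset filtrations $H, H'$ in $\WP$ with $D(H) = D(F)$, $D(H') = D(F')$, and $\delta(H, H') = d$. These $H, H'$ already realize the bottleneck distance as a natural pseudodistance in the poset world; the task is to carry them faithfully into $\C$ without disturbing either the diagrams or the pseudodistance.

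Second, I would push $H, H'$ through $T$, forming $T\circ H,\ T\circ H' \colon \Rl \rightarrow \CM$. These are genuine filtrations in $\C$: since $H, H'$ are built from finitely many cornerpoints they are eventually constant, hence so are $T\circ H, T\circ H'$, which therefore admit colimits and fall under the finiteness assumptions. Third, I would verify that $p_{\cal P}$ assigns to $T\circ H$ the same diagram as $H$. Since $p_{\cal P}$ is induced by $\maximal\circ S_{\cal P}$, the persistence function of $T\circ H$ is computed from $\maximal\circ S_{\cal P}\circ T\circ H$; by the hypothesized natural isomorphism $\maximal\circ S_{\cal P}\circ T \cong \maximal$, this agrees, as a diagram of finite sets and maps in $\S$, with $\maximal\circ H$, whose associated persistence function yields $D(H)$. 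Consequently $D(T\circ H) = D(H) = D(F)$ and, likewise, $D(T\circ H') = D(F')$.

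Fourth, I would bound the natural pseudodistance in $\C$ from above by $d$. Recalling that the natural pseudodistance coincides with the interleaving distance, and that an $h$-interleaving is a diagram of commuting morphisms preserved by any functor, for each $h > \delta(H, H')$ I can apply $T$ to an $h$-interleaving between $H$ and $H'$ to obtain an $h$-interleaving between $T\circ H$ and $T\circ H'$ in $\CM$. Taking the infimum over such $h$ gives $\delta(T\circ H, T\circ H') \le \delta(H, H') = d$. Stability of $p_{\cal P}$ (Prop.~\ref{prop:persistencefromproperty}) supplies the reverse inequality $d = d\big(D(T\circ H), D(T\circ H')\big) \le \delta(T\circ H, T\circ H')$, so equality holds. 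Hence $T\circ H, T\circ H'$ are filtrations in $\C$ with the same diagrams as $F, F'$ and whose pseudodistance equals the bottleneck distance, which is exactly the universality conclusion in the form of Prop.~\ref{prop:posetuniversal}.

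The step I expect to be the main obstacle is the diagram-preservation step: I must use the hypothesis at the level of the full induced persistence function, i.e. on the ranks of all the maps $\maximal\, S_{\cal P}(T(H(x))) \rightarrow \maximal\, S_{\cal P}(T(H(y)))$, and not merely on the cardinalities attached to single objects. This is precisely where the \emph{naturality} of the isomorphism $\maximal\circ S_{\cal P}\circ T \cong \maximal$ is indispensable: it guarantees that the bijections between maximal elements commute with the filtration maps, so the two persistence functions—and hence the diagrams—are genuinely equal rather than only isomorphic pointwise. By contrast, the transport of the interleaving through $T$ is formal once one works with the interleaving description of the pseudodistance rather than the colimit description.
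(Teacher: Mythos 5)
Your proposal is correct and takes essentially the same approach as the paper: reduce to Prop.~\ref{prop:posetuniversal}, transport the realizing poset filtrations $H, H'$ into $\C$ via $T$, use the natural isomorphism $\maximal\circ S_{\cal P}\circ T \cong \maximal$ to preserve diagrams, and use functoriality of interleavings together with stability to pin down the pseudodistance. The only cosmetic difference is that the paper applies Prop.~\ref{prop:posetuniversal} to the poset filtrations $S_{\cal P}\circ F$ and $S_{\cal P}\circ F'$ rather than directly to the diagrams $D(F), D(F')$, which is equivalent since that construction depends only on the diagrams; your write-up simply makes explicit the verification steps the paper leaves implicit.
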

\begin{proof}
    Given two filtrations $F, F'$ in $\C$, we can consider the filtrations of posets $S_{\cal P}\circ F$ and $S_{\cal P}\circ F'$. By Prop.~\ref{prop:posetuniversal}, there are filtrations of weakly directed poset $H, H'$ with the same persistence diagram, whose interleaving distance equals the bottleneck distance. Then, $T\circ H$ and $T\circ H'$ have the same persistence diagram as $F, F'$, and their interleaving distance equals the bottleneck distance.
\end{proof}

\subsubsection*{Clique communities}\label{communities}

An example of weakly directed property comes from clique communities.
We recall the definition of clique community given in \cite{Pa*05}. Given a graph $G=(V, E)$, two of its $k$-cliques (i.e. cliques of $k$ vertices) are said to be {\em adjacent} if they share $k-1$ vertices; a $k-${\em clique community} is a maximal union of $k$-cliques such that any two of them are connected by a sequence of $k$-cliques, where each $k$-clique of the sequence is adjacent to the following one. This construction has been applied to  network analysis \cite{toivonen2006model,kumpula2007emergence,palla2007quantifying,fortunato2010community} and to weighted graphs, in the classical topological persistence paradigm, in~\cite{rieck2018clique}. Here we consider a weighted graph as a filtration of graphs (where the weight of each vertex is the $\inf$ of the weights of its incident edges).

\begin{definition}
    \label{def:cliqueproperty}
    A graph $G$ belongs to $c^k$ if it is union of $k$-cliques, such that any two of them are connected by a sequence of adjacent $k$-cliques.
\end{definition}

\begin{figure}[tb]
    \centering
    \includegraphics[width = 0.75 \textwidth]{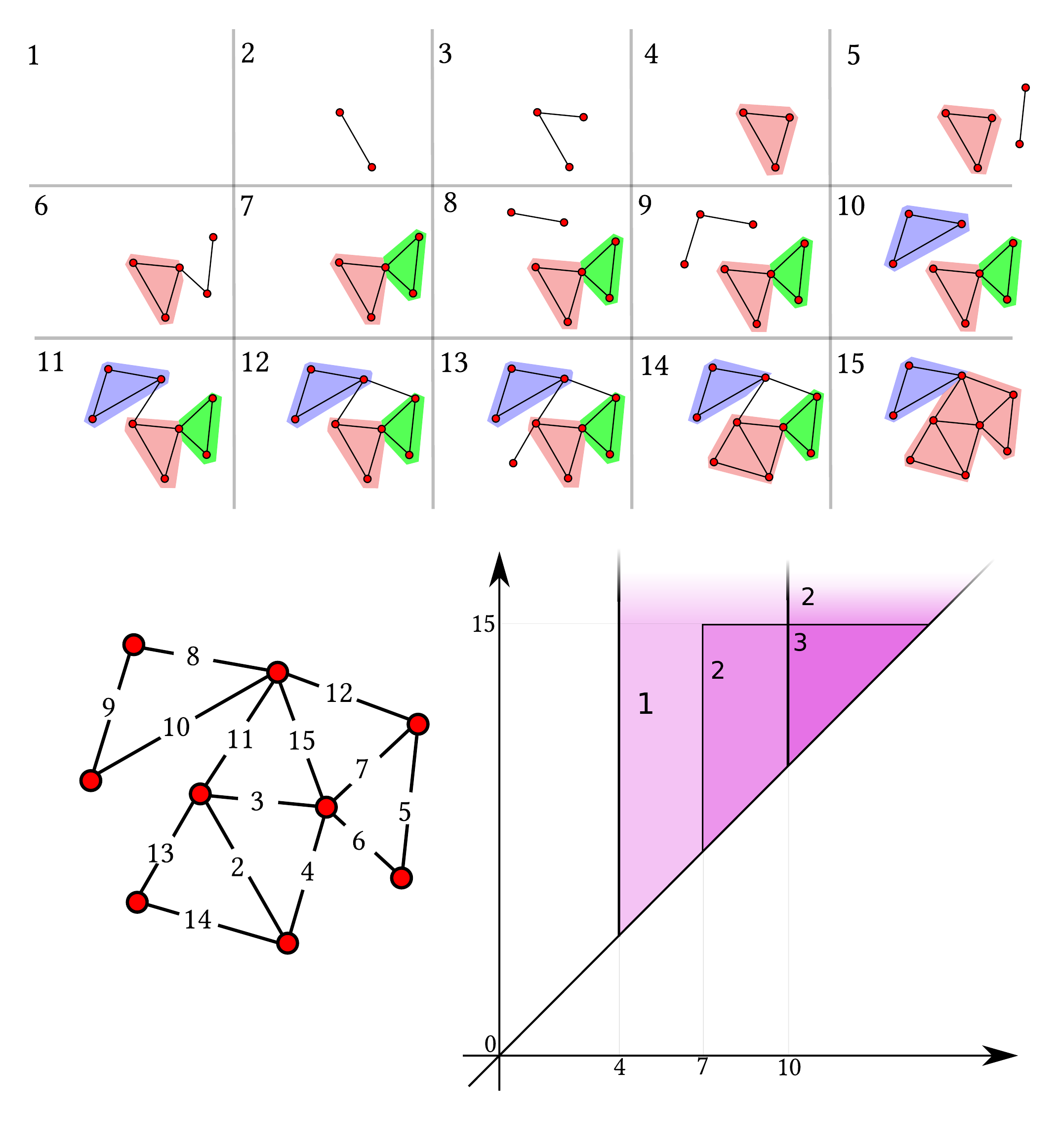}
    \caption{Example of persistent 3-clique community number function.
    }\label{figcommunity}
\end{figure}

\begin{proposition}
    $c^k$ is a weakly directed property.
\end{proposition}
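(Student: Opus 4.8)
The goal is to verify, for every finite graph $G$, that the poset $S_{c^k}(G)$ of subgraphs of $G$ lying in $c^k$ is weakly directed in the sense of Definition~\ref{def:weaklydirectedposet}. Thus I must show that any two elements $H_1, H_2 \in S_{c^k}(G)$ that possess a common lower bound also possess a common upper bound. Throughout I would read Definition~\ref{def:cliqueproperty} as requiring a graph in $c^k$ to be a nonempty union of $k$-cliques; equivalently, every element of $S_{c^k}(G)$ contains at least one $k$-clique. This reading is exactly what is needed below, and without it weak directedness genuinely fails, since two vertex-disjoint communities would share the empty graph as a lower bound but admit no common upper bound in $c^k$.

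First I would use the lower bound to produce a shared clique. Let $H_0 \in S_{c^k}(G)$ satisfy $H_0 \subseteq H_1$ and $H_0 \subseteq H_2$. Since $H_0 \in c^k$ it contains a $k$-clique $Q$, and because $H_0$ is a common subgraph, $Q$ is a $k$-clique of both $H_1$ and $H_2$. This $Q$ will act as a bridge between the two communities.

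The key construction is to pass to the ambient graph rather than to the set-theoretic union $H_1 \cup H_2$ (which need not itself lie in $c^k$, as the union can create new $k$-cliques whose edges are split between $H_1$ and $H_2$). Instead, let $C \subseteq G$ be the $k$-clique community of $G$ that contains $Q$, i.e.\ the maximal union of $k$-cliques of $G$ in which any two are joined by a sequence of pairwise adjacent $k$-cliques, with $Q$ among them. By definition $C \in c^k$. I then claim $H_1 \subseteq C$ and $H_2 \subseteq C$, which makes $C$ the desired common upper bound in $S_{c^k}(G)$ and finishes the proof.

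The main, and essentially only nontrivial, point is this containment. Fix $i \in \{1, 2\}$. Since $H_i \in c^k$, it is the union of its own $k$-cliques, so it suffices to show that every $k$-clique $Q'$ of $H_i$ lies in $C$. As $Q$ and $Q'$ are both $k$-cliques of $H_i$ and $H_i \in c^k$, there is a sequence of pairwise adjacent $k$-cliques of $H_i$ connecting $Q'$ to $Q$. Because $H_i$ is a subgraph of $G$, each clique of this sequence is a $k$-clique of $G$ and the adjacency relations (sharing $k-1$ vertices) are preserved, so the same sequence witnesses that $Q'$ belongs to the community of $G$ containing $Q$, namely $C$. Hence $Q' \subseteq C$, giving $H_i \subseteq C$. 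I expect the delicate step to be precisely this bookkeeping, namely that clique-adjacency connectivity inside the subgraph $H_i$ transfers unchanged to connectivity inside $G$, together with the nonemptiness convention that guarantees the bridging clique $Q$ exists in the first place.
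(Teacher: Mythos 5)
Your route is genuinely different from the paper's. The paper's entire proof is a closure statement: if $G_1, G_2 \subseteq G$ are in $c^k$ and $G_1 \cap G_2$ contains a $k$-clique, then $G_1 \cup G_2$ is again in $c^k$, so the union itself is the upper bound. You explicitly reject the union (fearing ``mixed'' cliques) and instead take as upper bound the ambient community $C$ of $G$ containing the shared clique $Q$, proving $H_1, H_2 \subseteq C$. For $k = 2, 3$ both arguments are sound, and yours has the aesthetic advantage of exhibiting the upper bound as a maximal element of $S_{c^k}(G)$, i.e.\ an actual community of $G$.

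However, there is a genuine gap, and it sits exactly in the step you flagged as the only nontrivial one: you use two incompatible readings of Definition~\ref{def:cliqueproperty}. The claim ``by definition $C \in c^k$'' reads it as: \emph{some} adjacency-chained family of $k$-cliques has union equal to the graph. The claim ``$Q$ and $Q'$ are $k$-cliques of $H_i$ and $H_i \in c^k$, hence they are joined by a chain of adjacent $k$-cliques of $H_i$'' reads it as: \emph{all} $k$-cliques of the graph are pairwise chained. These coincide for $k \le 3$, because any $k$-clique inside a union of $k$-cliques shares an edge, hence $k-1$ vertices, with one of them; but they diverge for $k \ge 4$. Concretely, cover the six edges of a ``phantom'' $K_4$ on $\{a,b,c,d\}$ by six $4$-cliques, each meeting $\{a,b,c,d\}$ in exactly one edge and having its own two private vertices, and chain these six cliques to one another through $4$-cliques that meet $\{a,b,c,d\}$ in a single vertex. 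The resulting graph is the union of a chained family, yet it contains the $4$-clique $\{a,b,c,d\}$, which shares at most two vertices with every other $4$-clique and so is chained to nothing. Now glue two vertex-disjoint copies of this gadget along the phantom $K_4$ and call the result $G$, with $H_1, H_2$ the two copies: under the first (covering) reading $H_1, H_2 \in c^4$, their intersection is the $4$-clique $Q = \{a,b,c,d\}$, but the community $C$ of $G$ containing $Q$ is $Q$ itself, so your upper bound fails --- and in fact no upper bound exists, since the only subgraph containing $H_1 \cup H_2$ is $G$ itself, which is not in $c^4$. Under the second (all-cliques) reading, it is instead ``$C \in c^k$'' that fails (a community can contain a phantom clique of another component), and a variant of the same gluing again kills the proposition. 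So for $k \ge 4$ your proof cannot be repaired under either reading, because the statement itself is false there; for $k \le 3$ your argument is correct. You are in good company: the paper's one-line union argument breaks on the very same example ($H_1 \cup H_2 = G \notin c^4$ despite the shared $4$-clique), and the ``mixed cliques'' you worried about are precisely this $k \ge 4$ phenomenon --- it just undermines your construction as much as the paper's.
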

\begin{proof}
    If two subgraphs $G_1, G_2 \subseteq G$ are in $c^k$, and there is a $k$-clique in $G_1 \cap G_2$, then $G_1\cup G_2$ is also in $c^k$.
\end{proof}
As a consequence, $c^k$ induces a stable persistence function $p_{c^k}$ on graph filtrations, which we call {\em persistent $k$-clique community number}. In practice, given a graph filtration $F$, the persistent $k$-clique community number $p_{c^k}(u, v)$ equals the number of $k$-clique communities in $F(v)$ that contain at least a $k$-clique when restricted to $F(u)$.

\begin{remark}
    Of course, the persistent 2-clique community number function of a weighted graph $(G, f)$, such that no isolated vertices appear in the filtration, coincides with its persistent 0-Betti number function.
\end{remark}

An example of persistent 3-clique community number function can be seen in Fig.~\ref{figcommunity}. We can associate to $p_{c^k}$, via \cite[Def.~3.13]{bergomi2019rank}, a {\em persistent $k$-clique community diagram} $D_{c^k}(f)$.

There is a general construction that allows us to prove universality for many weakly directed properties of graphs. Let $n$ be a positive integer. Given a weakly directed poset $P$, we can consider the graph whose vertices are $P\times \{1, \dots, n\}$, where distinct vertices $(p, i)$ and $(q, j)$ are connected by an edge if $p$ and $q$ are comparable in $P$. This mapping induces a functor $T_n\colon \WPM \rightarrow \GM$, and $\maximal\circ S_{c^k}\circ T_k$ is naturally isomorphic to $T_k$. By Prop.~\ref{prop:universalityproperty}, the bottleneck distance is universal with respect to the natural pseudodistance.

\section{Generalized connectedness}
\label{sec:generalizedconnectedness}

Here, we introduce a notions of generalized connectedness in the context of \emph{coherent categories}, i.e. categories with pullback-stable image factorization and pullback-stable union of subobjects.
For nomenclature and basic facts about coherent categories we follow~\cite[Chapt. A1.4]{johnstoneSketchesElephantTopos2002}. Other authors refer to coherent categories with different names, such as \emph{pre-logos}~\cite{freydCategoriesAllegories1990}, or \emph{logical category}~\cite{makkaiFirstOrderCategorical2006}. The reader can think as an example $\C = \G$, the category of finite simple graphs, but many variations are possible, such as quivers, simplicial sets, simplicial complexes, or, more generally, categories of sheaves or concrete sheaves.

First, we introduce the notion of connected object in a coherent category.

\begin{definition} \label{def:connected}
    An object $C$ in a coherent category $\mathbf C$ with initial object $0$ is \emph{connected} if, for all pairs of subobjects $A, B\hookrightarrow C$ with $A\cap B = 0$ and $A\cup B = C$, exactly one of $A$ and $B$ is $0$.
\end{definition}
\begin{remark}
    In the above definition, we consider equality as equality of subobjects. So $A \cap B = 0$ means that $A\cap B$ is isomorphic to $0$, and $A \cup B = C$ means that the the monomorphism $A\cup B\hookrightarrow C$ induced by $A\hookrightarrow C$ and $B\hookrightarrow C$ is an isomorphism.
\end{remark}
The notion of connectedness is generally introduced, in a categorical setting, in the context of {\em extensive categories}~\cite{CaLaWa93}. There, we say that a non-initial object $C$ is connected if it cannot be expressed as a non trivial coproduct $A\amalg B$.
Here, however, the related but distinct concept of coherent category is needed, as we will need to work with the distributive lattice of subobjects. If a category $\C$ is both coherent and extensive, however, the two notions overlap. As coproducts are disjoint in an extensive category, $A, B$ are always disjoint subobjects of $A\amalg B$. Conversely, in a coherent category, if $A\cap B = 0$, then $A\cup B \simeq A\amalg B$, with coprojections given by the inclusions.

\begin{proposition}\label{prop:connectedweaklydirected}
    Let $X_0, X_1, X_2$ be subobjects of $X\in\Obj(\C)$. If $X_0 \subseteq X_1$ and $X_0 \subseteq X_2$ and all $X_0, X_1, X_2$ are connected, so is $X_1\cup X_2$.
    As a consequence, the property of being connected is weakly directed.
\end{proposition}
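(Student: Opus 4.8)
The plan is to work entirely inside the subobject lattice of $X_1\cup X_2$, which is distributive because $\C$ is coherent, and to treat $X_0, X_1, X_2$ as elements of this lattice with $X_1\cup X_2$ playing the role of the top element. The whole argument reduces to: restrict an arbitrary disjoint cover of $X_1\cup X_2$ down to $X_1$ and to $X_2$ separately, use connectedness of each piece, and finally invoke the common lower bound $X_0$ to rule out the incompatible case.

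First I would record that $X_1\cup X_2$ is non-initial. Indeed, a connected object is never isomorphic to $0$: if it were, the pair $A=B=0$ would satisfy $A\cap B=0$ and $A\cup B=0$ while neither being "the unique" zero, contradicting Def.~\ref{def:connected}. Hence $X_0\not\simeq 0$, and since $X_0\subseteq X_1\subseteq X_1\cup X_2$ we get $X_1\cup X_2\not\simeq 0$. Next, take subobjects $A,B\hookrightarrow X_1\cup X_2$ with $A\cap B=0$ and $A\cup B=X_1\cup X_2$; the goal is to show exactly one of them is $0$. Setting $A_1=A\cap X_1$ and $B_1=B\cap X_1$ as subobjects of $X_1$, distributivity gives $A_1\cap B_1=(A\cap B)\cap X_1=0$ and $A_1\cup B_1=(A\cup B)\cap X_1=X_1$. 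Since $X_1$ is connected, exactly one of $A_1,B_1$ is $0$; after possibly swapping $A$ and $B$ I may assume $B_1=0$, i.e.\ $X_1\subseteq A$. The same computation applied to $X_2$ shows that either $X_2\subseteq A$ or $X_2\subseteq B$.

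The crux is eliminating the branch $X_2\subseteq B$, and this is exactly where the common lower bound is used: if $X_2\subseteq B$, then $X_0\subseteq X_2\subseteq B$ while also $X_0\subseteq X_1\subseteq A$, forcing $X_0\subseteq A\cap B=0$ and contradicting $X_0\not\simeq 0$. Therefore $X_2\subseteq A$, whence $A=X_1\cup X_2$ and $B=A\cap B=0$. Thus exactly one of $A,B$ is $0$, so $X_1\cup X_2$ is connected. For the stated consequence, any two connected subobjects $X_1,X_2$ of a fixed $X$ that admit a common lower bound in the poset of connected subobjects necessarily admit a \emph{connected} common lower bound $X_0$, and then $X_1\cup X_2$ is a connected subobject dominating both, i.e.\ an upper bound in that poset; this is precisely the weak-directedness condition of Def.~\ref{def:weaklydirectedposet}.

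The only delicate points I expect are bookkeeping ones rather than conceptual obstacles: justifying that $A\cap X_1$ and $B\cap X_1$ genuinely form a disjoint cover of $X_1$ \emph{within} $\mathrm{Sub}(X_1)$ (which needs the distributive law of the coherent subobject lattice together with compatibility of meets and joins under restriction along $X_1\hookrightarrow X_1\cup X_2$), and confirming that connectedness forces non-initiality so that the $X_2\subseteq B$ branch is truly contradictory. Both follow directly from the coherent structure and Def.~\ref{def:connected}, so no real difficulty remains once the lattice-theoretic setup is fixed.
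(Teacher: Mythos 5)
Your proof is correct and takes essentially the same approach as the paper's: both restrict a disjoint cover $(A,B)$ of $X_1\cup X_2$ to the pieces via distributivity of the coherent subobject lattice, apply connectedness of each piece, and use the common lower bound $X_0$ to force $X_1$ and $X_2$ into the same half of the cover. The only differences are bookkeeping order (the paper splits $X_0$ first and propagates upward, whereas you split $X_1,X_2$ first and invoke $X_0$ only through its non-initiality) and that you spell out the fact that connected objects are non-initial, which the paper uses implicitly.
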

\begin{proof}
    Let us consider the following diagram of subobjects of some object in $\C$:
    \begin{equation*}
        \begin{tikzcd}
            X_0 \arrow[hookrightarrow]{r} \arrow[hookrightarrow, swap]{d}
            & X_1 \arrow[hookrightarrow]{d}\\
            X_2 \arrow[hookrightarrow]{r} & X1 \cup X2 \\
        \end{tikzcd}
    \end{equation*}
    By hypothesis $X_0, X_1, X_2$ are connected.
    Let $A, B$ be subobjects of $X_1 \cup X_2$ such that $A\cup B = X_1 \cup X_2$ and $A\cap B = 0$. $X_0$ is connected and $X_0 = (A\cap X_0)\cup(B\cap X_0)$ by distributivity~\cite[Lm. 1.4.2]{johnstoneSketchesElephantTopos2002}.
    Therefore, exactly one of $A\cap X_0$ and $B\cap X_0$ is initial. Let us assume without loss of generality $A\cap X_0 = 0$ and $B \cap X_0 = X_0$. Then $A\cap X_1 = 0$ and $A\cap X_2 = 0$, whereas $B\cap X_1 = X_1$ and $B\cap X_2 = X_2$. As a consequence, $A = 0$ and $B = X_1 \cup X_2$. From this, we conclude that $X$ is connected and $X_1 \cup X_2$ is $\cal F$-connected.
\end{proof}

In the coherent category $\G$ the notion of connectedness can be generalized to stronger notions, namely $k$-vertex-connectedness~\cite{balinskiGraphStructureConvex1961} and $k$-edge-connectedness [ref?].
The same holds in an arbitrary coherent category $\C$, provided one is able to specify a class of monomorphism that is {\em preserved by restrictions}, such as deleting fewer than $k$-vertices or fewer than $k$-edges in a graph.
Here, we provide a unified notion of $\cal F$-connected component for a given class $\cal F$ of monomorphisms in $\mathbf{C}$.
We prove that being ${\cal F}$-connected is a weakly directed property in the sense of Def.~\ref{def:weaklydirectedproperty}.

\begin{definition}
    \label{def:preservedbyrestrictions}
    Let $\mathbf{C}$ be a coherent category. We say that a family $\cal{F}$ of monomorphisms in $\CM$ is {\em preserved by restrictions} if, for each monomorphism $\phi \in \cal{F}$, pullbacks of $\phi$ along monomorphisms are also in $\cal{F}$.
\end{definition}
\begin{remark}
    As pullbacks are only defined up to isomorphism, we are tacitly assuming that if $\phi$ is in ${\cal F}$, and $\psi, \psi'$ are isomorphisms, then $\psi \circ \phi \circ \psi'$ is also in $\cal F$.
\end{remark}

\begin{definition} \label{def:fconnected}
    Let $\C$ be a coherent category. Let $\cal{F}$ be a family of monomorphisms in $\CM$ that is preserved by restrictions. An object $X$ is $\cal{F}$-{\em connected} if all subobjects $Y \xhookrightarrow{\phi} X$, where the inclusion $\phi$ belongs to $\cal{F}$, are connected. Given an object $C \in \textnormal{Obj}(\C)$, we say that an $\cal{F}$-{\em connected component} of $C$ is a maximal subobject $X \hookrightarrow C$ such that $X$ is $\cal{F}$-connected.
\end{definition}

\begin{proposition}\label{prop:fconnetedweaklydirected}
    $\cal F$-connectedness is a weakly directed property, in the sense of Def.~\ref{def:weaklydirectedproperty}.
\end{proposition}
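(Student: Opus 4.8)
The plan is to mirror the structure of the proof of Proposition~\ref{prop:connectedweaklydirected}, since $\cal F$-connectedness is a strengthening of ordinary connectedness obtained by quantifying over a family of subobjects closed under restriction. Concretely, following Def.~\ref{def:weaklydirectedproperty}, I must show that for any $X \in \Obj(\C)$ the poset $S_{\cal F}(X)$ of $\cal F$-connected subobjects is weakly directed: whenever two $\cal F$-connected subobjects $X_1, X_2 \hookrightarrow X$ admit a common $\cal F$-connected lower bound $X_0 \hookrightarrow X_1$, $X_0 \hookrightarrow X_2$, their union $X_1 \cup X_2$ is again $\cal F$-connected. This is exactly the hypothesis configuration of Prop.~\ref{prop:connectedweaklydirected}, so the only genuinely new content is verifying that $\cal F$-connectedness, rather than mere connectedness, is inherited by $X_1 \cup X_2$.

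First I would fix an arbitrary subobject $Y \xhookrightarrow{\phi} X_1 \cup X_2$ with $\phi \in \cal F$, and seek to prove $Y$ is connected. The key idea is to restrict $\phi$ along the inclusions $X_1 \hookrightarrow X_1\cup X_2$ and $X_2 \hookrightarrow X_1 \cup X_2$ (and $X_0 \hookrightarrow X_1 \cup X_2$) by taking pullbacks. Because $\cal F$ is preserved by restrictions (Def.~\ref{def:preservedbyrestrictions}), the pullbacks $Y \cap X_1 \hookrightarrow X_1$, $Y \cap X_2 \hookrightarrow X_2$ and $Y \cap X_0 \hookrightarrow X_0$ all lie in $\cal F$. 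Since $X_0, X_1, X_2$ are $\cal F$-connected by hypothesis, each of these restricted subobjects is connected in the sense of Def.~\ref{def:connected}. Moreover, by distributivity of the subobject lattice~\cite[Lm. 1.4.2]{johnstoneSketchesElephantTopos2002} one has $Y = (Y\cap X_1)\cup(Y\cap X_2)$ and $Y\cap X_0 = (Y\cap X_1)\cap(Y\cap X_2)$ as subobjects of $Y$, with the common connected piece $Y\cap X_0$ contained in both.

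This reduces the problem to the following local statement: if $Y\cap X_1$, $Y\cap X_2$ and their intersection $Y\cap X_0$ are connected, and $Y\cap X_0 \subseteq Y\cap X_1$, $Y\cap X_0 \subseteq Y\cap X_2$, then their union $Y = (Y\cap X_1)\cup(Y\cap X_2)$ is connected. But this is precisely an instance of Prop.~\ref{prop:connectedweaklydirected} applied to the three subobjects $Y\cap X_0, Y\cap X_1, Y\cap X_2$ of $Y$ (equivalently of $X$). Hence $Y$ is connected. Since $Y$ was an arbitrary $\cal F$-subobject of $X_1\cup X_2$, we conclude $X_1\cup X_2$ is $\cal F$-connected, which is the required weak directedness of $S_{\cal F}(X)$.

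The main obstacle I anticipate is the careful bookkeeping needed to justify that pullback along a monomorphism genuinely produces the lattice intersections $Y\cap X_i$ and that the distributivity identity $Y\cap X_0 = (Y\cap X_1)\cap(Y\cap X_2)$ holds at the level of subobjects of $Y$; this requires invoking pullback-stability of images and unions built into the definition of a coherent category, together with the fact that $X_0 = X_1 \cap X_2$ (or at least $X_0 \subseteq X_1\cap X_2$, which suffices since a subobject contained in a connected subobject containing the relevant overlap still forces the union to be connected). A subtle point worth stating explicitly is that one needs $Y\cap X_0$ to be nonempty/initial-or-not in the right sense so that the connectedness dichotomy propagates across the overlap exactly as in the proof of Prop.~\ref{prop:connectedweaklydirected}; once the pullback family lies in $\cal F$, this propagation is automatic and the argument closes.
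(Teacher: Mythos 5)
Your proof is correct and follows essentially the same route as the paper's: pull the square of subobjects back along $\phi \in \mathcal{F}$, use closure of $\mathcal{F}$ under restrictions to conclude each piece $Y\cap X_i$ is connected, write $Y = (Y\cap X_1)\cup(Y\cap X_2)$ via pullback-stability (distributivity) of unions, and finish by applying Prop.~\ref{prop:connectedweaklydirected}. The only blemish is the incidental claim $Y\cap X_0 = (Y\cap X_1)\cap(Y\cap X_2)$, which need not hold since $X_0$ is only assumed to be contained in $X_1$ and $X_2$, not to equal their intersection; but, as you note yourself, Prop.~\ref{prop:connectedweaklydirected} only needs the containments $Y\cap X_0 \subseteq Y\cap X_1$ and $Y\cap X_0 \subseteq Y\cap X_2$, so the argument stands.
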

\begin{proof}
    Let us consider the following commutative diagram of subobjects $X_0, X_1, X_2$ of some object $C \in \textnormal{Obj}({\C})$.
    \begin{equation}\label{diag:subobjects}
        \begin{tikzcd}
            X_0 \arrow[hookrightarrow]{r} \arrow[hookrightarrow, swap]{d}
            & X_1 \arrow[hookrightarrow]{d}\\
            X_2 \arrow[hookrightarrow]{r} & X_1 \cup X_2 \\
        \end{tikzcd}
    \end{equation}
    We need to prove that if $X_0, X_1, X_2$ are $\cal{F}$-connected, so is $X_1\cup X_2$.
    Let $Y \xhookrightarrow{\phi} X_1\cup X_2$ be a monomorphism in $\cal{F}$.
    Let us consider the pullback of diagram~\ref{diag:subobjects} along $\phi$:
    \begin{equation*}
        \begin{tikzcd}
            Y_0 \arrow[hookrightarrow]{r} \arrow[hookrightarrow, swap]{d}
            & Y_1 \arrow[hookrightarrow]{d}\\
            Y_2 \arrow[hookrightarrow]{r} & Y \\
        \end{tikzcd}
    \end{equation*}
    $Y = Y_1 \cup Y_2$, as finite unions of subobjects are preserved by pullbacks in coherent categories.
    By hypothesis $Y_0, Y_1, Y_2$ are connected and, by Prop~\ref{prop:connectedweaklydirected}, so is $Y$.
\end{proof}

In the rest of this section, we will give several examples of ${\cal F}$-connectedness, and study the associated persistence functions.

\subsection{Blocks}\label{blocks}

We recall that a graph is {\em $k$-vertex-connected} if it has at least $k$-vertices and remains connected whenever fewer than $k$ vertices are removed~\cite{balinskiGraphStructureConvex1961,harary1962maximum}. We say that a maximal $k$-vertex-connected subgraph of a given graph $G$ is a {\em $k$-vertex-connected component}.

\begin{definition}
    \label{def:kvertexdeletion}
    Let $G_1 = (V_1, E_1)$ and $G_2 = (V_2, E_2)$ be two finite simple graphs, with $G_1\subseteq G_2$.
    We say that the inclusion $G_1\subseteq G_2$ is a {\em $k$-vertex deletion} if:
    \begin{itemize}
        \item $G_1$ is a full subgraph of $G_2$,
        \item $\lvert V_2 \rvert - \lvert V_1 \rvert < k$.
    \end{itemize}
\end{definition}
The class of $k$-vertex deletions is preserved by restrictions in the sense of Def.~\ref{def:preservedbyrestrictions}: a $k$-vertex-deletion restricted to a subobject is still a $k$-vertex-deletion. Therefore, this class of monomorphisms induces a notion of generalized connectedness.

Let us denote $v^k$ be the class of graphs that are $\cal F$-connected, with $\cal F$ the class of $k$-vertex deletions. $v^k$ is a weakly directed property, and therefore it induces a functor
\begin{equation*}
    S_{v^k}\colon\GM\rightarrow\WPM
\end{equation*}
and a stable monic persistence function $p_{v^k}$ on graph filtrations, which we call {\em persistent $k$-block number}.
In practice, given a graph filtration $F$, the persistent $k$-block number $p_{c^k}(u, v)$ equals the number of $k$-vertex-connected components in $F(v)$ that contain at least a $k$-vertex-connected component when restricted to $F(u)$.

Furthermore, the bottleneck distance is universal with respect to the natural pseudodistance. To prove universality, we consider the same functor $T_n$ as in Sect.~\ref{communities}, and note that $\maximal\circ S_{v^k}\circ T_k$ is naturally isomorphic to $\maximal$.

An example of $k$-block number function (for $k=2$) can be seen in Fig.~\ref{figblock}. We can then associate to $p_{v^k}$, via \cite[Def.~3.13]{bergomi2019rank}, a {\em persistent block diagram} $D_{v^k}(f)$ with all classical features granted by the propositions of Section~\ref{sec:persistencesubobjects}.
A toy example is given in Fig.~\ref{figblock}.

\begin{figure}[tb]
    \centering
    \includegraphics[width = 0.75\textwidth]{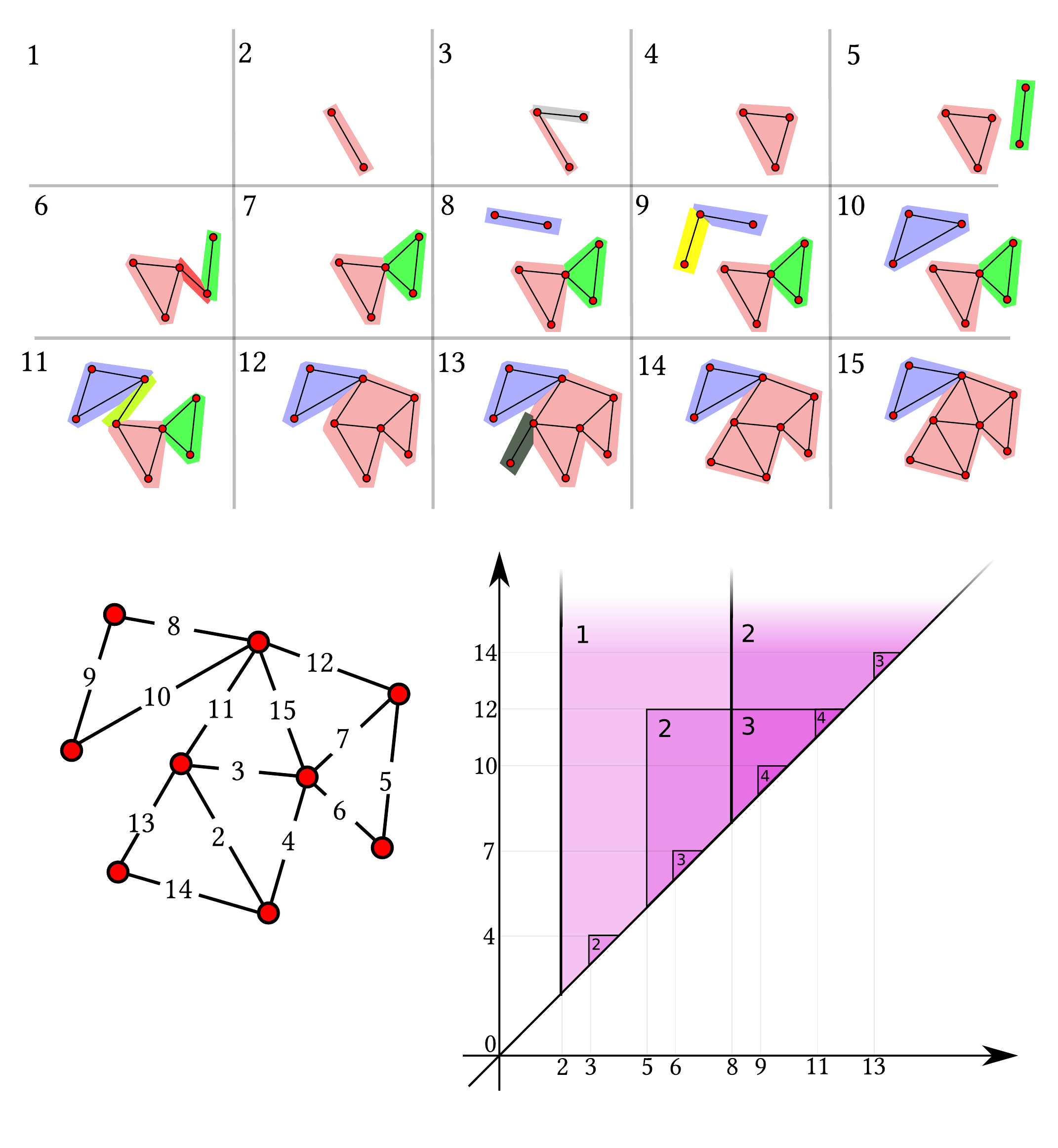}
    \caption{Example of persistent block number function.
    }\label{figblock}
\end{figure}

\subsection{Edge-blocks}\label{eb}
We say that a graph is {\em $k$-edge-connected} if it remains connected when removing fewer than $k$ edges~\cite{harary1962maximum}.

\begin{definition}
    \label{def:kedgedeletion}
    Let $G_1 = (V_1, E_1)$ and $G_2 = (V_2, E_2)$ be two finite simple graphs, with $G_1\subseteq G_2$.
    We say that the inclusion $G_1\subseteq G_2$ is a {\em $k$-edge deletion} if $\lvert E_2 \rvert - \lvert E_1 \rvert < k$.
\end{definition}
The class of $k$-edge deletions is preserved by restrictions in the sense of Def.~\ref{def:preservedbyrestrictions}: a $k$-edge-deletion restricted to a subobject is still a $k$-edge-deletion. Therefore, this class of monomorphisms induces a notion of generalized connectedness.
Let us denote $e^k$ be the class of graphs that are $\cal F$-connected, with $\cal F$ the class of $k$-edge deletions. $e^k$ is a weakly directed property, and therefore it induces a functor
\begin{equation*}
    S_{e^k}\colon\GM\rightarrow\WPM
\end{equation*}
and a stable monic persistence function $p_{e^k}$ on graph filtrations, which we call {\em persistent $k$-edge-block number}.
In practice, given a graph filtration $F$, the persistent $k$-edge-block number $p_{c^k}(u, v)$ equals the number of $k$-edge-connected components in $F(v)$ that contain at least a $k$-edge-connected component when restricted to $F(u)$.

Furthermore, the bottleneck distance is universal with respect to the natural pseudodistance. To prove universality, we consider the same functor $T_n$ as in Sect.~\ref{communities}, and note that $\maximal\circ S_{e^k}\circ T_k$ is naturally isomorphic to $\maximal$.

\begin{figure}[tb]
    \centering
    \includegraphics[width = 0.75\textwidth]{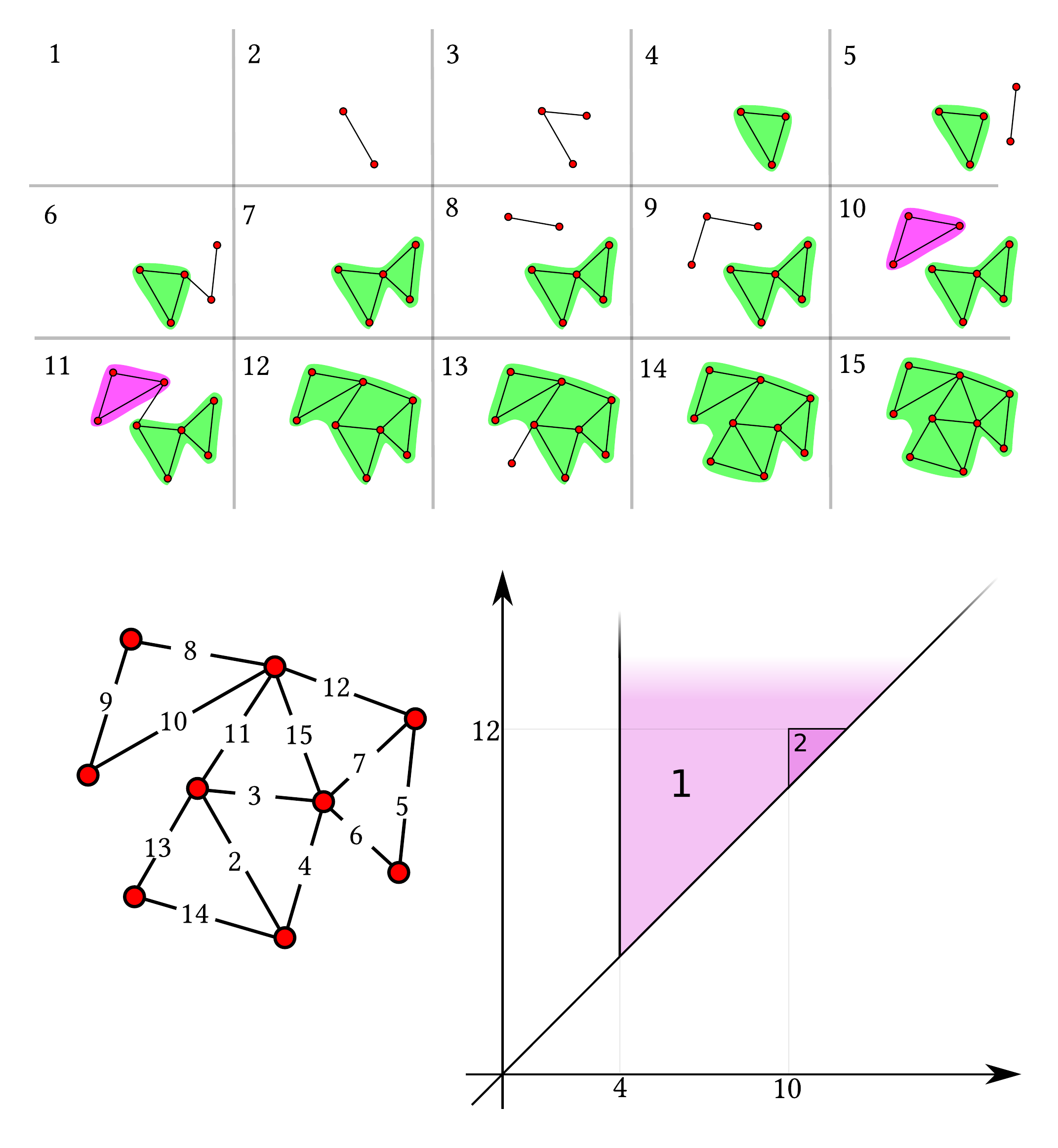}
    \caption{Example of persistent edge-block number function. For displaying purposes edge-blocks corresponding to isolated vertices are not shown.
    }\label{figeblock}
\end{figure}

An example of persistent edge-block number function (for $k=2$) can be seen in Fig.~\ref{figeblock}. We can associate to $p_{e^k}$, via \cite[Def.~3.13]{bergomi2019rank}, a {\em persistent edge-block diagram} $D_{e^k}(f)$.

\subsection{Blocks in group actions on quivers}

\begin{figure}[!htb]
    \centering
    \includegraphics[width = \textwidth]{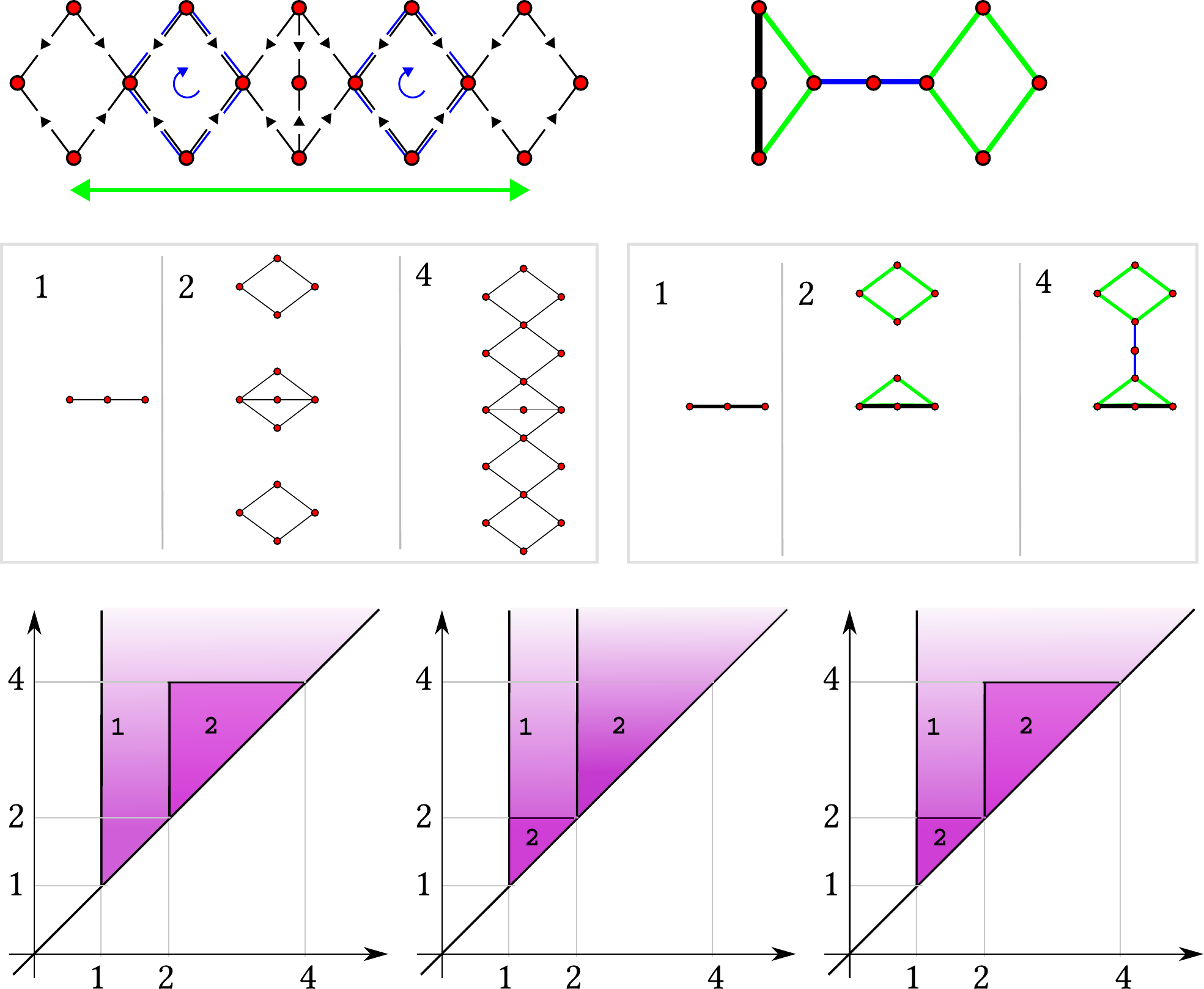}
    \caption{Persistence of a quiver $K$ with a group action of $\mathbb{Z}_2 \times \mathbb{Z}_2$. In the top row, on the right a graphical representation of $K$ and the group action: the green arrow is a reflection with respect to the center of the graphical representation of $K$ along the vertical axis. The second copy of $\mathbb{Z}_2$ acts on the edges highlighted in blue by identifying opposite edges with coherent orientations. On the right the quotient $K/\left(\mathbb{Z}_2 \times \mathbb{Z}_2\right)$. Colors correspond to the action generating the quotient; fixed points are depicted in black. In the middle row, the filtration induced by the cardinality of the orbit and its respective quotients. In the bottom row, the persistence diagrams obtained by considering connected components with respect to isomorphisms, orbit deletions, or fixed vertex deletions in $G-\mathbf{Quiver}$.}\label{fig:groupaction}
\end{figure}

Even though all previous examples were in the category of graphs, this frameworks can be used to effortlessly construct novel examples of persistence general coherent categories. Here we will consider $\mathbf{Quiver}$ the category of finite quivers (directed multigraphs, allowing self loops).

If a category $\mathbf{C}$ is coherent, then so is $\textnormal{Fun}(\mathbf{D}, \mathbf{C})$, where $\mathbf{D}$ is an arbitrary small category~\cite[Chapt. A1.4]{johnstoneSketchesElephantTopos2002}. In particular, given a group $G$, we can consider $G-\mathbf{Quiver}$, the category of $G$-actions on quivers. It is a functor category $\textnormal{Fun}(\mathbf{G}, \mathbf{Quiver})$, where $\mathbf{G}$ is a category with only one object $*$, and such that $\textnormal{Hom}(*, *) = G$, whose composition of morphisms is given by the operation in $G$.

Here we can consider at least three distinct classes of monomorphisms that are preserved by subobjects: isomorphisms, orbit deletions, or fixed vertex deletions. In turn, these three notions induce three distinct stable persistence functions, whose diagrams are illustrated in an example case in Fig.~\ref{fig:groupaction}.

\section{Conclusion and perspectives}


We built on a generalized theory of persistence, which no longer requires topological mediations such as auxiliary simplicial constructions, or the usage of homology as the functor of choice. We identified the properties that make a category suitable for our axiomatic persistence framework. We showed how these hypotheses allow us to define persistence directly in many relevant categories (e.g. graphs and simplices) and functor categories (simplicial sets and quivers), while guaranteeing the basic properties of classical persistence.
We gave a flexible definition---weakly directed property---for the construction of generalized persistence functions, and applied them to toy examples in the category of weighted graphs. Therein, we discussed the stability of the generalized persistence functions built according to our definitions and by considering blocks, edge-blocks, and clique communities. Finally, as a confirmation of both generality and agility of our framework, we showed how various concepts of connectivity specific to graphs, such as blocks and edge-blocks are easily extended to other categories, in particular categories of presheaves, where they naturally induce (generalized) persistence.

This work is the combinatorial counterpart of the foundational results exposed in~\cite{bergomi2019rank}.
There, we focused on Abelian categories and categories of representations, aiming to extend persistence to objects relevant in theoretical physics or theoretical chemistry: Lie-group representations, quiver representations, or representations of the category of cobordisms (related to topological quantum field theory in \cite{baez_higherdimensional_1995}).
Here, we focus on coherent categories, which generally do not have any additive structure, but are of interest in several branches of Machine Learning and Artificial Intelligence. Graphs and their connectivity properties occupy a central role in these research fields, where networks are oftentimes represented as weighted quivers. These structures can be compared quantitatively via stable categorical persistence functions, and possibly optimized by defining bottleneck distance-based loss functions. The inference of states also plays a central role in reinforcement learning. In this context, agents--whose action policy is typically computed by deep neural networks--could be compared by considering the weighted graph defined by considering the activation of the network at each state-action pair.

We hope that this work paves the road to new applications of the persistence paradigm in various fields. We list a few possible developments that are currently being developed by our team and hopefully by other researchers.

So far we have just considered $\mathbb{R}$ as a parameter for filtrations, but there has been much progress in the study of filtering functions with $\mathbb{R}^k$ \cite{FrMu99,CaZo07,CaDiFe10} or even $\mathbb{S}^1$ \cite{BuDe13} as a range, and of spaces parametrized by a lattice \cite{CoSk13}. The definition of persistence functions should be extended to these settings.

Persistence diagrams are but a shadow of much more general and powerful tools: persistence modules and further \cite{BuSc14,Les15,deMu*17}, on which the {\it interleaving distance} plays a central role. It is necessary to connect the ideas of the present paper to that research domain.

\section*{Acknowledgments}
We are indebted to Diego Alberici, Emanuele Mingione, Pierluigi Contucci (whose research originated the present one), Luca Moci, Fabrizio Caselli and Patrizio Frosini for many fruitful discussions. MGB and PV wish to thank Champalimaud Research and Zachary F. Mainen for funding their work. Article written within the activity of INdAM-GNSAGA.

\bibliographystyle{abbrv}
\bibliography{bibliography}
\end{document}